\documentclass[12pt,reqno]{amsart}
\usepackage{fullpage}
\usepackage[top=1.5in, bottom=2in, left=2.5in, right=2.5in]{geometry}
\usepackage{graphicx,amsmath,amssymb,amsfonts,amsthm,enumitem,bm,xcolor}
\usepackage{fullpage}
\usepackage{cleveref}

\usepackage{amsmath}

\usepackage[normalem]{ulem}

\usepackage{url}

\usepackage{color}

\renewcommand{\a}{\alpha}

\renewcommand{\r}{{\rho}}

\renewcommand{\(}{\left\(}
\renewcommand{\)}{\right\)}

\numberwithin{equation}{section}
\theoremstyle{plain}
\newtheorem{theorem}{Theorem}[section]

\newtheorem{conjecture}[theorem]{Conjecture}
\newtheorem*{conjecture*}{Conjecture}

\newtheorem{proposition}[theorem]{Proposition}

\theoremstyle{definition}

\newtheorem*{remark}{Remark}

\numberwithin{equation}{section}

\renewcommand{\pmod}[1]{\ \left( \mathrm{mod} \, #1 \right)}

\setlist[enumerate]{leftmargin=*,label=\rm{(\arabic*)}}

\makeatletter
\@namedef{subjclassname@2020}{%
	\textup{2020} Mathematics Subject Classification}
\makeatother

\allowdisplaybreaks

\title{Proof of a conjecture of Andrews and El Bachraoui on the parity of two-color partitions}
\author{Koustav Banerjee}
\author{Kathrin Bringmann}
\address{University of Cologne, Department of Mathematics and Computer Science, Weyertal 86-90, 50931 Cologne, Germany}
\email{kbanerj1@uni-koeln.de}
\email{kbringma@math.uni-koeln.de}

\makeatletter
\@namedef{subjclassname@2020}{%
	\textup{2020} Mathematics Subject Classification}
\makeatother

\subjclass[2020]{05A17,\ 11P81,\ 11P83.}
\keywords{colored partitions, congruences, $q$-series, representation numbers, theta functions}

\begin{document}

\begin{abstract}

 In this paper, we prove a conjecture of Andrews and El Bachraoui concerning the parity of certain two-color partitions. Precisely, we show that if the Fourier coefficient $t_o(n)$ of the corresponding $q$-series is odd, then $8n+9$ is represented by the binary quadratic form $x^2+2y^2$. %This settles a conjecture of Andrews and Bachraoui. The proof is elementary and relies only on classical $q$-series identities.
\end{abstract}

\maketitle

\section{Introduction and statement of results}
  Andrews and El Bachraoui \cite{AB} introduced certain restricted two-color partitions. Precisely, they considered two-color partitions of $n$, denoted by $s_1(n)$, into distinct parts whose smallest part occurs in one prescribed color only, while every larger part may occur in either color or in both colors. The corresponding generating function is given by\footnote{In \cite[p. 68]{AndCC}, Andrews interpreted $S_1(q)$ as the generating function of so-called strictly concave compositions, denoted by $V_d(n)$.} 
\[
S_1(q):=\sum_{n\ge 0}s_1(n)q^n=\sum_{n\ge 0}\left(-q^{n+1}\right)^2_{\infty}q^n.
\]
Here, for $n\in \mathbb{N}_0\cup \{\infty\}$, $(a)_n=(a;q)_n:=\prod_{j=0}^{n-1}(1-aq^j)$ denotes the usual {\it $q$-Pochhammer symbol}. Andrews and El Bachraoui \cite[Theorem 1]{AB} proved the following congruence of $s_1(n)$ modulo $4$:
\begin{theorem}
For $n\in \mathbb{N}_0$, we have
\[
s_1(n)\equiv \begin{cases}
(-1)^r\pmod{4} &\text{if}\quad n=\frac{r(r+1)}{2}\ \text{for some}\ r\in \mathbb{N}_0,\\
0\pmod{4} &\text{otherwise}.
\end{cases}
\]
In particular, $s_1(n)$ is odd iff $n$ is a triangular number.
\end{theorem}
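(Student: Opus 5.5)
The plan is to reduce modulo $4$ to a generating function with minus signs, which can then be evaluated exactly. The reduction rests on the elementary identity
\[
(1+q^k)^2=(1-q^k)^2+4q^k .
\]
Each factor of $(-q^{n+1})^2_\infty$ is therefore congruent modulo $4$ (coefficientwise, as power series with integer coefficients) to the corresponding factor of $(q^{n+1})^2_\infty$. Since congruences modulo $4$ are preserved under products of integral power series, and each coefficient of $S_1(q)$ involves only finitely many $n$ and finitely many factors, we get
\[
S_1(q)\equiv T(q):=\sum_{n\ge0}\left(q^{n+1}\right)_\infty^2 q^n \pmod 4 .
\]
The theorem then follows from the exact identity
\[
T(q)=\sum_{r\ge0}(-1)^r q^{\frac{r(r+1)}{2}}.
\]
Indeed, triangular numbers $\frac{r(r+1)}{2}$ are distinct for distinct $r\in\mathbb{N}_0$, so this identity gives $s_1(n)\equiv(-1)^r$ or $0 \pmod 4$ exactly as claimed. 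The parity statement is immediate from it.

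To prove the identity, I would write $(q^{n+1})_\infty=\frac{(q)_\infty}{(q)_n}$ for one of the two factors. For the other factor I would use Euler's expansion
\[
\left(q^{n+1}\right)_\infty=\sum_{k\ge0}\frac{(-1)^k q^{\frac{k(k+1)}{2}+nk}}{(q)_k}.
\]
This gives
\[
T(q)=(q)_\infty\sum_{k\ge0}\frac{(-1)^kq^{\frac{k(k+1)}{2}}}{(q)_k}\sum_{n\ge0}\frac{q^{n(k+1)}}{(q)_n}.
\]
The interchange of summation is harmless, since everything converges absolutely for $|q|<1$, or formally, because for each power of $q$ only finitely many $(n,k)$ contribute. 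The inner sum is Euler's identity $\sum_{n\ge0}\frac{z^n}{(q)_n}=\frac{1}{(z)_\infty}$ with $z=q^{k+1}$, so it equals $\frac{1}{(q^{k+1})_\infty}$. Hence
\[
T(q)=\sum_{k\ge0}(-1)^kq^{\frac{k(k+1)}{2}}\frac{(q)_\infty}{(q)_k\,(q^{k+1})_\infty}=\sum_{k\ge0}(-1)^kq^{\frac{k(k+1)}{2}},
\]
because $(q)_k(q^{k+1})_\infty=(q)_\infty$.

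The only step needing care is the first one: the congruence modulo $4$ must be applied to a product of two factors per $n$, and then summed over $n$. This is fine because all series involved have integer coefficients. I expect no real obstacle beyond choosing to expand only one of the two copies of $(q^{n+1})_\infty$, which is what makes the double sum collapse.
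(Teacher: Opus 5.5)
The paper gives no proof of this statement. It quotes it from \cite[Theorem 1]{AB} as background, so there is no in-paper argument to compare yours against.

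Your proof is correct and complete.
\begin{itemize}
\item The identity $(1+q^k)^2=(1-q^k)^2+4q^k$ gives $S_1(q)\equiv\sum_{n\ge0}(q^{n+1})_\infty^2q^n\pmod 4$, because each coefficient involves only finitely many $n$ and finitely many factors.
\item In the evaluation the exponents match: $q^{k(k-1)/2}\cdot q^{(n+1)k}=q^{k(k+1)/2+nk}$.
\item The $n$-sum is Euler's $\sum_{n\ge0}z^n/(q)_n=1/(z)_\infty$ at $z=q^{k+1}$.
\item The relation $(q)_k(q^{k+1})_\infty=(q)_\infty$ then collapses each term to $(-1)^kq^{k(k+1)/2}$.
\end{itemize}
As a sanity check, the first coefficients come out as $1-q+q^3+0\cdot q^4+0\cdot q^5+\cdots$, as they should.

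Your argument fits the paper closely. The right-hand side of your exact identity is exactly the series $\Theta(q)$ of \eqref{def3}, which appears in \Cref{prop0}. Your mod $4$ sign flip is also the analogue of the mod $2$ device used in \Cref{sec2}. There, the relations $T(q)=(q)_\infty+2S(q)$, $\Theta(q)=\Psi(q)-2R(q)$ and $(-q)_\infty^2\equiv(q^2;q^2)_\infty\pmod 2$ trade a series with all plus signs for its signed counterpart at the cost of an even error term, so that exact product identities can be applied.

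Your route is self-contained: it needs only Euler's two expansions of $(z)_\infty^{\pm1}$. It also yields the exact identity $\sum_{n\ge0}(q^{n+1})_\infty^2q^n=\Theta(q)$ as a by-product.

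One cosmetic point: your name $T(q)$ for this series clashes with the paper's $T(q)$ in \eqref{def2}, so rename it.
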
 
Moreover, they \cite[p. 2]{AB} considered the following $q$-series related to $S_1(q)$: 
\begin{equation}\label{A}
T_o(q):=\sum_{n\ge 0}t_o(n)q^n=\sum_{n\ge 0}\frac{(-q)_{2n}}{\left(q;q^2\right)_{n+1}}q^{2n}.
\end{equation}
Setting
\begin{equation}\label{B}
C(q):=(q)_{\infty}T_o(q)=:\sum_{n\ge 0}c(n)q^n,
\end{equation}
Andrews and El Bachraoui \cite[Theorem 3]{AB} showed that if $c(n)\neq 0$, then $24n+28$ is represented by the binary quadratic form $x^2+3y^2$. In the same spirit, they \cite[Conjecture 2]{AB} proposed the following conjecture:
\begin{conjecture}\label{conj2}
	If $8n+9$ is not represented by $x^2+2y^2$, then
	\[
	t_o(n)\equiv 0\pmod{2}.
	\]	
\end{conjecture}
Our main result establishes \Cref{conj2}.%% by proving the following theorem.
\begin{theorem}\label{thm}
 \Cref{conj2} holds. 
\end{theorem}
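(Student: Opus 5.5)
The plan is to compute $T_o(q)$ modulo $2$ explicitly and show that it is congruent to a series supported on integers $n$ for which $8n+9$ is represented by $x^2+2y^2$.

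First, I rewrite the target condition. If $8n+9=x^2+2y^2$, then $x$ is odd, so $x^2\equiv 1\pmod 8$. This forces $2y^2\equiv 0\pmod 8$, i.e.\ $y=2m$. Hence $n+1=\frac{k(k+1)}{2}+m^2$ with $x=2k+1$, $k,m\in\mathbb{N}_0$. Conversely, every such decomposition gives a representation. So it suffices to prove that
\[
qT_o(q)\equiv \sum_{n\ge 0} a(n)q^n \pmod 2,
\]
where $a(n)=0$ unless $n$ is a sum of a triangular number and a square. The natural shape to aim for is a product or sum of $\psi(q)=\sum_{k\ge0}q^{k(k+1)/2}$ with a series supported on squares. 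Since $\sum_{m\in\mathbb{Z}}q^{m^2}\equiv 1\pmod 2$, the square part will most likely appear in a one-sided form, e.g.\ $\sum_{m\ge0}q^{m^2}$, or as a Lambert/Hecke-type sum whose exponents are of the form $T_k+m^2$.

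Next, I reduce the summands modulo $2$. Using $(-q)_{2n}\equiv (q)_{2n}=(q;q^2)_n(q^2;q^2)_n \pmod 2$, the factor $(q;q^2)_n$ cancels against the denominator, leaving
\[
T_o(q)\equiv \sum_{n\ge0}\frac{(q^2;q^2)_n\,q^{2n}}{1-q^{2n+1}}\pmod 2 .
\]
I then expand $\frac{1}{1-q^{2n+1}}=\sum_{j\ge0}q^{j(2n+1)}$ and interchange the sums, so that the inner sums are $\sum_{n}(q^2;q^2)_n\,(q^{2j+2})^n$. These are instances of the Fine/Rogers–Fine function $\sum_n (a)_n t^n$ with $q\mapsto q^2$. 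To such sums I would apply the Rogers–Fine identity, or equivalently a Heine transformation followed by a partial fraction argument. The aim is to turn $T_o$ (exactly, before reducing mod $2$ if possible) into a two-fold indefinite-theta-type series of the form $\sum \pm q^{Q(j,n)}$, with $Q$ an indefinite or mixed quadratic form. This mirrors how Andrews and El Bachraoui obtain the $x^2+3y^2$ statement for $C(q)$.

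Finally, I reduce that double series modulo $2$, where signs disappear. Terms related by an involution (such as $n\mapsto -n-1$ or swapping the roles of $j$ and $n$) pair off and cancel. The survivors should be exactly exponents $T_k+m^2-1$, after completing the square in $Q$. A cleaner route, if it works, is to relate $T_o$ to the series $C(q)=(q)_\infty T_o(q)$, whose Hecke-type expansion is essentially in \cite{AB}. One would then divide by $(q)_\infty$ modulo $2$, using $(q)_\infty^{2}\equiv (q^2;q^2)_\infty$ and Jacobi's $(q)_\infty^3\equiv\psi(q)$-type identities mod $2$. I expect the main obstacle to be this last stage: obtaining an exact Hecke-type identity for $T_o$, or for a mod-$2$ equivalent series, whose quadratic form, after pairing-cancellation modulo $2$, visibly collapses to $x^2+2y^2$. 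I would first try the Rogers–Fine route directly on the reduced series above, since it avoids needing to divide by $(q)_\infty$.
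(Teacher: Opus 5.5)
There is a genuine gap. Your reformulation of the target is correct: since $x$ must be odd, $8n+9=x^2+2y^2$ forces $y$ even, so the condition is that $n+1$ is a triangular number plus a square. Your reduction $T_o(q)\equiv\sum_{n\ge0}\frac{(q^2;q^2)_n\,q^{2n}}{1-q^{2n+1}}\pmod 2$ is also correct.

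Everything after that is a plan rather than a proof. The step you call the main obstacle is an identity for $T_o(q)$ modulo $2$ whose exponents are visibly of the form $\frac{k(k+1)}{2}+m^2-1$. That step is the entire content of the theorem, and it is never carried out.

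The Rogers--Fine route, as you set it up, does not visibly lead there. Letting $b\to0$ in the Rogers--Fine identity for $\sum_{n}(q^2;q^2)_n\,(q^{2j+2})^n$ leaves the factor $\frac{(q^2;q^2)_n(q^2;q^2)_j}{(q^2;q^2)_{n+j+1}}$ in every term. So you do not get a two-fold theta series, and the asserted pairing of terms under an involution (``the survivors should be exactly \ldots'') has nothing concrete to act on.

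The missing ingredient is the exact identity of Andrews and El Bachraoui, $2qC(q)=\Psi(q)T(q)-(q)_\infty\Theta(q)$, which your ``cleaner route'' only gestures at. Two further points are needed that your sketch does not address.

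First, the factor $2$ on the left means you cannot simply reduce modulo $2$; that would say nothing about $t_o(n)$. You need an exact cancellation. By Euler's pentagonal theorem, $T(q)=(q)_\infty+2S(q)$ with $S(q)=\sum_{m\ge1,\ m\ \text{odd}}(1-q^{2m+1})q^{\frac{m(3m+1)}{2}}$. Likewise $\Theta(q)=\Psi(q)-2R(q)$ with $R(q)=\sum_{m\ge1,\ m\ \text{odd}}q^{\frac{m(m+1)}{2}}$. The main terms cancel, and one gets the identity of integer power series $qT_o(q)=\frac{\Psi(q)S(q)}{(q)_\infty}+R(q)$.

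Second, dividing by $(q)_\infty$ is harmless only because it falls on $\Psi$. You list the right congruences, but they become usable only in the form $\frac{\Psi(q)}{(q)_\infty}=(-q)_\infty^2\equiv(q^2;q^2)_\infty\equiv\sum_{m\in\mathbb{Z}}q^{m(3m+1)}\pmod 2$. An expansion of $C(q)$ alone (e.g.\ the support statement for $c(n)$) would not survive division by $(q)_\infty$. Modulo $2$, $1/(q)_\infty$ is the partition generating function, which has no usable closed form.

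The result is
\[
qT_o(q)\equiv S(q)\sum_{m\in\mathbb{Z}}q^{m(3m+1)}+R(q)\pmod 2 .
\]
Its exponents have exactly your target shape:
\begin{itemize}
\item $\frac{r(3r+1)}{2}+m(3m+1)=\frac{(r+2m)(r+2m+1)}{2}+(r-m)^2$;
\item $\frac{r(3r+1)}{2}+2r+1+m(3m+1)=\frac{(r-2m)(r-2m+1)}{2}+(r+m+1)^2$;
\item $\frac{m(m+1)}{2}$.
\end{itemize}
No Hecke-type series or pairing argument is needed.
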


%{\bf \color{blue} K: added the following remark.}

\begin{remark}
After the completion and submission of the present manuscript, we became aware of the independent work \cite{LX}, which also proves \Cref{conj2}. The two works were carried out independently and simultaneously, and they employ substantially different methods.
\end{remark}

 The paper is organized as follows. In \Cref{sec1}, we record elementary $q$-series identities. Proof of \Cref{thm} is given in \Cref{sec2}. %We conclude the paper with a follow up problem stated in \Cref{sec3}. 

\section*{Acknowledgements}
The authors have received funding from the European Research Council (ERC) under the European Union's Horizon 2020 research and innovation programme (grant agreement No. 101001179).

\section{Preliminaries}\label{sec1}

Here we list three $q$-series identities which we use in \Cref{sec2}. We begin with the following representation of $C(q)$ \cite[equation (4.8)]{AB}:
\begin{proposition}\label{prop0}
We have
\[
2qC(q)=\Psi(q)T(q)-(q)_{\infty}\Theta(q),
\]
where %\footnote{The definitions are given in \cite[(4.3), below Proposition 1, (4.1)]{AB}.}
\begin{align}\label{def1}
\Psi(q)&:=\sum_{n\ge 0}q^{\frac{n(n+1)}{2}},\\\label{def2}
T(q)&:=\sum_{n\ge 0}\left(1-q^{2n+1}\right)q^{\frac{n(3n+1)}{2}},\\\label{def3}
\Theta(q)&:=\sum_{n\ge 0}(-1)^n q^{\frac{n(n+1)}{2}}.
\end{align}
\end{proposition}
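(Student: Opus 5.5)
Dividing by $(q)_\infty$ and using the product formulas $\Psi(q)=\frac{(q^2;q^2)_\infty}{(q;q^2)_\infty}$ (Gauss) and $(q)_\infty=(q;q^2)_\infty(q^2;q^2)_\infty$, I would reduce the claim to the equivalent identity
\[
2q\,T_o(q)=\frac{T(q)}{\left(q;q^2\right)_\infty^2}-\Theta(q).
\]
This is the form I would prove. It separates a "false theta times an infinite product" term from a pure false theta term $\Theta(q)$. That split is typical of outputs of the Rogers--Fine identity,
\[
\sum_{n\ge0}\frac{(aq)_n}{(bq)_n}t^n=\sum_{n\ge0}\frac{(aq)_n\left(\frac{atq}{b}\right)_n b^nt^nq^{n^2}\left(1-atq^{2n+1}\right)}{(bq)_n(t)_{n+1}},
\]
and of Heine-type transformations. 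The factor $(1-q^{2n+1})$ in $T(q)$ strongly suggests a Rogers--Fine sum in base $q$ with $a=t=1$-type specializations. For example, $\sum_n \frac{(q)_n}{(-q)_n}\cdots$-type series give $\sum_n (1-q^{2n+1})q^{n(3n+1)/2}$ after the $(aq)_n(atq/b)_n/((bq)_n(t)_{n+1})$ factors telescope.

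\textbf{Step 1.} I would rewrite the summand of \eqref{A} in base $q^2$ using $(-q)_{2n}=(-q;q^2)_n(-q^2;q^2)_n$. This gives
\[
T_o(q)=\frac{1}{1-q}\sum_{n\ge0}\frac{(-q;q^2)_n(-q^2;q^2)_n}{(q^3;q^2)_n}q^{2n},
\]
which is a ${}_3\phi_2$-type series with one numerator parameter sent to the limit. I would apply Heine's transformation (or a limiting case of the ${}_3\phi_2$ Sears/Hall transformation) in base $q^2$. The aim is to trade the denominator $(q^3;q^2)_n$ for an infinite-product prefactor, which should produce $1/(q;q^2)_\infty^2$, i.e.\ $\Psi(q)/(q)_\infty$, times a new series in which $(q;q^2)_n$ or $(q)_n$ appears in the numerator.

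\textbf{Step 2.} I would identify that new series with $T(q)$, up to a boundary term, via Rogers--Fine. I would choose $a,b,t$ so that the numerator and denominator Pochhammers cancel and leave $q^{n(3n+1)/2}(1-q^{2n+1})$.

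\textbf{Step 3.} The leftover "boundary" contributions, coming from the $n=0$ term or from writing $\frac{1}{1-q}$ and $2q$ as a difference of two series, should assemble into $-\Theta(q)$. For this I would use the elementary identity
\[
\sum_{n\ge0}\frac{(q)_n}{(-q)_{n+1}}\,(\pm q)^n=\sum_{n\ge0}(-1)^nq^{\frac{n(n+1)}{2}}-\text{type},
\]
which is itself the $a=t=$ trivial case of Rogers--Fine. Alternatively, I would write $T_o$ as half the sum or difference of two simpler series and evaluate each one.

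\textbf{Main obstacle.} I expect the main obstacle to be Step 1/2: finding the right transformation chain so that the $(-q)_{2n}$ numerator, which mixes bases $q$ and $q^2$, produces exactly the Euler-type exponent $n(3n+1)/2$ and the double product $(q;q^2)_\infty^{-2}$. If the direct Heine route fails, I would first try the following fallback. I would establish a $q$-difference or functional equation for the two-parameter series $F(z)=\sum_n\frac{(-q)_{2n}}{(zq;q^2)_{n+1}}q^{2n}$ in $z$, derive the analogous equation for the right-hand side, and check that both sides agree at $z=0$.
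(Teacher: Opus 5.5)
Your opening reduction is correct: with Gauss's $\Psi(q)=(q^2;q^2)_\infty/(q;q^2)_\infty$, the proposition is equivalent to $2qT_o(q)=T(q)/(q;q^2)_\infty^2-\Theta(q)$. But that is only a restatement. Everything after it is a list of tools, not an argument: no transformation is carried out, no parameters are fixed, and the two decisive claims are stated only as expectations. Those claims are that the prefactor $(q;q^2)_\infty^{-2}$ comes out, and that the remainder is exactly $-\Theta(q)$. The paper does not prove this statement either; it quotes it from \cite[equation (4.8)]{AB}, so there is no argument there to lean on.

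Moreover, the concrete ingredients you name do not work as stated.
In Step 1, Heine's transformation needs a ${}_2\phi_1$, i.e.\ a factor $(q^2;q^2)_n$ in the denominator. Your series is a ${}_3\phi_2$ in base $q^2$ with upper parameters $-q,-q^2,q^2$, lower parameters $q^3,0$ and argument $q^2$. Note that it is a lower parameter that vanishes, not an upper one sent to a limit. The standard nonterminating Sears/Hall transformations require argument $de/abc$, which is $0$ here, so they do not apply either.

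In Step 2, your example is off: Rogers--Fine sums built on $(q)_n/(-q)_n$ produce exponents $n^2+O(n)$, not $n(3n+1)/2$. If you match the Rogers--Fine right-hand side with $T(q)$ term by term, you are forced to $at=1$, and then to one of two cases:
\begin{itemize}
\item $a=t=-1$, $b\to0$, whose left side $\sum_n(-1)^n(-q;q)_n$ diverges;
\item $a\to\infty$, $t=1/a$, $b=-1$, which is Rogers' false theta identity $T(q)=\sum_{n\ge0}\frac{(-1)^nq^{n(n+1)/2}}{(-q;q)_n}$.
\end{itemize}

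In Step 3, the displayed identity is wrong. Rogers--Fine with $a=1$, $b=-q$, $t=q$ gives $\sum_{n\ge0}\frac{(q;q)_n}{(-q;q)_{n+1}}q^n=\sum_{n\ge0}(-1)^nq^{n(n+2)}$, not $\Theta(q)$. With $-q$ in place of $q$ the coefficients begin $1,-2,4,-5$, so no false theta function appears at all.

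So the actual content of the proposition is never addressed. Inserting Rogers' identity, what you would have to prove is
\[
2qT_o(q)+\Theta(q)=\sum_{n\ge0}(-1)^nq^{\frac{n(n+1)}{2}}(-q;q)_n\left(-q^{n+1};q\right)_\infty^2 .
\]
Equivalently, expanding $\Psi T-(q)_\infty\Theta$ with $(q)_\infty=\sum_{n\ge0}(-1)^n(1-q^{2n+1})q^{n(3n+1)/2}$, you would have to prove the double-sum identity
\[
C(q)=\sum_{\substack{m,n\ge0\\ m+n\ \mathrm{odd}}}\left(1-q^{2n+1}\right)q^{\frac{m(m+1)}{2}+\frac{n(3n+1)}{2}-1}.
\]
Nothing in your Steps 1--3 produces either identity. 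You need an explicit mechanism, for instance a Bailey pair, or a fully specified chain of transformations with each step checked.

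The fallback via a $q$-difference equation in $z$ is likewise empty as it stands. You would have to write down the equation satisfied by $F(z)$, give a $z$-deformation of the right-hand side satisfying the same equation, and explain why that equation together with the value at $z=0$ determines the series.
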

Euler's pentagonal number theorem \cite[Corollary 1.7]{Andbook} states the following:
\begin{proposition}\label{prop1}
We have 
\[
(q)_{\infty}=\sum_{n\in \mathbb{Z}}(-1)^n q^{\frac{n(3n+1)}{2}}.
\]
\end{proposition}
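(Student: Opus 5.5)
Since \Cref{prop1} is a classical identity, we would give the combinatorial proof of Franklin rather than invoke a theta function identity. The plan is to expand the product and interpret each coefficient as a signed count, then cancel almost everything with a sign-reversing involution. Expanding the product gives
\[
(q)_{\infty}=\prod_{j\ge 1}\left(1-q^j\right)=\sum_{n\ge 0}\left(d_e(n)-d_o(n)\right)q^n,
\]
where $d_e(n)$ (resp.\ $d_o(n)$) counts partitions of $n$ into an even (resp.\ odd) number of distinct parts. It therefore suffices to show that $d_e(n)-d_o(n)=(-1)^k$ if $n=\frac{k(3k\pm1)}{2}$ for some $k\in\mathbb{N}_0$, and $d_e(n)-d_o(n)=0$ otherwise. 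Indeed, the exponents $\frac{n(3n+1)}{2}$ with $n\in\mathbb{Z}$ are exactly these numbers, taking $n=k$ and $n=-k$.

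\emph{Step 1 (the involution).} For a partition $\lambda$ into distinct parts, let $s(\lambda)$ be its smallest part. Let $\sigma(\lambda)$ be the length of the maximal run of consecutive integers $\lambda_1,\lambda_1-1,\dots$ starting at the largest part (the ``slope''). If $s\le\sigma$, remove the smallest part and add $1$ to each of the $s$ largest parts. If $s>\sigma$, subtract $1$ from each of the $\sigma$ largest parts and append a new smallest part $\sigma$. Each move changes the number of parts by exactly one, so it reverses the sign $(-1)^{\#\text{parts}}$. When both moves are legal, they are mutually inverse.

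\emph{Step 2 (the exceptional cases).} The main obstacle is verifying exactly when the move fails to produce a valid partition into distinct parts of the same type. This happens only when the smallest part belongs to the slope, i.e.\ the partition is the single run $k+j-1,\dots,k$ of $k$ parts, and in addition $s=\sigma$ or $s=\sigma+1$. For $s=\sigma=k$ the partition is $(2k-1,\dots,k)$, of size $\frac{k(3k-1)}{2}$. For $s=\sigma+1$ it is $(2k,\dots,k+1)$, of size $\frac{k(3k+1)}{2}$. I would check carefully that in all other configurations, including those where the slope and the smallest part overlap but $s\notin\{\sigma,\sigma+1\}$, the map stays inside distinct-part partitions and is an involution.

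\emph{Step 3 (conclusion).} The involution pairs off all partitions except at most one exceptional partition for each $n$, and that partition has $k$ parts. This gives $d_e(n)-d_o(n)=(-1)^k$ at the generalized pentagonal numbers and $0$ elsewhere. The two families for $k\ge1$ never coincide, and $n=0$ corresponds to $k=0$. Reindexing $k\mapsto\pm n$ then yields \Cref{prop1}.

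Alternatively, if a faster route is preferred, we would apply Jacobi's triple product with $q\mapsto q^{3}$ and $z=-q^{-1}$ (in the normalization $\prod(1-q^{2m})(1+zq^{2m-1})(1+z^{-1}q^{2m-1})=\sum z^nq^{n^2}$, after replacing $q$ by $q^{3/2}$). The product then collapses to $\prod_{m}(1-q^{3m})(1-q^{3m-1})(1-q^{3m-2})=(q)_\infty$.
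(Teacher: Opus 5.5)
Your Franklin-involution argument is correct and is essentially the route behind the paper's statement: the paper gives no proof of its own but cites Andrews' book (Corollary 1.7), where the identity is deduced from Legendre's theorem on $d_e(n)-d_o(n)$. That theorem is proved by exactly this involution, with the same exceptional partitions $(2k-1,\dots,k)$ and $(2k,\dots,k+1)$.

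There are only cosmetic slips. The single run in Step 2 should read $(s+k-1,\dots,s)$. In your triple-product aside, the normalization $\sum_{n\in\mathbb{Z}} z^nq^{n^2}$ with $q\mapsto q^{3/2}$ needs $z=-q^{1/2}$. Your choice $q\mapsto q^{3}$, $z=-q^{-1}$ instead fits the form $\prod_{m\ge1}(1-q^m)(1+zq^m)(1+z^{-1}q^{m-1})=\sum_{n\in\mathbb{Z}}z^nq^{\frac{n(n+1)}{2}}$.
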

Finally, we use the following eta-quotient representation of $\Psi(q)$ \cite[equation (2.2.13)]{Andbook}:
\begin{proposition}\label{prop2}
We have
\[
\Psi(q)=\frac{\left(q^2;q^2\right)^2_{\infty}}{(q)_{\infty}}.
\]
\end{proposition}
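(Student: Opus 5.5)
We prove \Cref{prop2} by specializing the Jacobi triple product identity and then rewriting the resulting product with Euler's identity $(-q;q)_\infty=(q^2;q^2)_\infty/(q;q)_\infty$. Nothing beyond the triple product is needed, and we take it as known (see \cite{Andbook}) in the form
\[
\sum_{n\in\mathbb{Z}} z^n q^{n^2}=\left(q^2;q^2\right)_\infty\left(-zq;q^2\right)_\infty\left(-z^{-1}q;q^2\right)_\infty .
\]

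First we replace $q$ by $q^{\frac12}$. This gives
\[
\sum_{n\in\mathbb{Z}} z^n q^{\frac{n^2}{2}}=(q;q)_\infty\left(-zq^{\frac12};q\right)_\infty\left(-z^{-1}q^{\frac12};q\right)_\infty .
\]
Next we set $z=q^{\frac12}$. The left-hand side becomes $\sum_{n\in\mathbb{Z}}q^{\frac{n(n+1)}{2}}$. On the right-hand side the two factors become $(-q;q)_\infty$ and $(-1;q)_\infty=2(-q;q)_\infty$. Hence
\[
\sum_{n\in\mathbb{Z}}q^{\frac{n(n+1)}{2}}=2(q;q)_\infty(-q;q)_\infty^2 .
\]

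The involution $n\mapsto -1-n$ maps the negative integers bijectively onto $\mathbb{N}_0$ and preserves $\frac{n(n+1)}{2}$. Therefore the bilateral sum equals $2\Psi(q)$, with $\Psi$ as in \eqref{def1}, and we obtain
\[
\Psi(q)=(q;q)_\infty(-q;q)_\infty^2 .
\]

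Finally we use $(-q;q)_\infty=\frac{(q^2;q^2)_\infty}{(q;q)_\infty}$. This follows from $(1+q^j)(1-q^j)=1-q^{2j}$, multiplied over all $j\ge1$. Substituting gives
\[
\Psi(q)=(q;q)_\infty\cdot\frac{\left(q^2;q^2\right)_\infty^2}{(q;q)_\infty^2}=\frac{\left(q^2;q^2\right)_\infty^2}{(q)_\infty},
\]
which is the claim.

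The only delicate step is the specialization $z=q^{\frac12}$. The factor $(-1;q)_\infty$ has leading term $1+1=2$, and this factor of $2$ is exactly what cancels the doubling in the bilateral sum. Everything here is an identity of formal power series in $q^{\frac12}$ whose coefficients of half-integral exponents vanish, so the substitutions are legitimate.
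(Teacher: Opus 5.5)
Your proof is correct. The paper gives no proof of this identity and simply quotes it from \cite[equation (2.2.13)]{Andbook}. Your argument is the standard derivation behind that reference: specialize the Jacobi triple product via $q\mapsto q^{1/2}$ and $z=q^{1/2}$, fold the bilateral sum with $n\mapsto -1-n$, and apply $(-q;q)_\infty=(q^2;q^2)_\infty/(q;q)_\infty$. Your closing remark about formal power series is unnecessary, since the triple product holds analytically for all $z\neq 0$ and $|q|<1$ and the substitution $z=q^{1/2}$ is immediate there.
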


\section{Proof of \Cref{thm}}\label{sec2}

In this section, we prove \Cref{thm}.

\begin{proof}[Proof of \Cref{thm}]
By \eqref{A} and \eqref{B}, we have
\begin{equation}\label{def4}
T_o(q)=\frac{C(q)}{(q)_{\infty}}=\sum_{n\ge 0}t_o(n)q^n.
\end{equation}
Using \eqref{def4} and \Cref{prop0}, we obtain
\begin{align}\label{eqn1}
2\sum_{n\ge 0}t_o(n)q^{n+1}&=2qT_o(q)=\frac{2qC(q)}{(q)_{\infty}}%=\frac{\Psi(q)D(q)-(q)_{\infty}\Theta(q)}{(q)_{\infty}}\nonumber\\
%&
=\frac{\Psi(q)T(q)}{(q)_{\infty}}-\Theta(q).
\end{align}
Now, by \Cref{prop1}, we obtain
\begin{align*}
(q)_{\infty}&=\sum_{m\in \mathbb{Z}}(-1)^m q^{\frac{m(3m+1)}{2}}%=\sum_{m\ge 0}(-1)^m q^{\frac{m(3m+1)}{2}}+\sum_{m\ge 1}(-1)^m q^{\frac{m(3m-1)}{2}}\\
=\sum_{m\ge 0}(-1)^m q^{\frac{m(3m+1)}{2}}-\sum_{m\ge 0}(-1)^m q^{\frac{(m+1)(3m+2)}{2}}\\
&=\sum_{m\ge 0}(-1)^m\left(1-q^{2m+1}\right) q^{\frac{m(3m+1)}{2}}\\
&=\sum_{\substack{m\ge 0\\m\ \text{even}}}\left(1-q^{2m+1}\right) q^{\frac{m(3m+1)}{2}}-\sum_{\substack{m\ge 1\\m\ \text{odd}}}\left(1-q^{2m+1}\right) q^{\frac{m(3m+1)}{2}}\\
&=\sum_{m\ge 0}\left(1-q^{2m+1}\right) q^{\frac{m(3m+1)}{2}}-2\sum_{\substack{m\ge 1\\m\ \text{odd}}}\left(1-q^{2m+1}\right) q^{\frac{m(3m+1)}{2}}=T(q)-2S(q),
\end{align*}
by \eqref{def2}, where
\[
S(q):=\sum_{\substack{m\ge 1\\m\ \text{odd}}} \left(1-q^{2m+1}\right)q^{\frac{m(3m+1)}{2}}.
\]
Hence
\begin{equation}\label{eqn2}
T(q)=(q)_{\infty}+2S(q).
\end{equation}
Next, we have, by \eqref{def3} and \eqref{def1}
\begin{align}\label{eqn3}
\Theta(q)&=\sum_{n\ge 0}(-1)^n q^{\frac{n(n+1)}{2}}=\sum_{\substack{n\ge 0\\n\ \text{even}}} q^{\frac{n(n+1)}{2}}-\sum_{\substack{n\ge 0\\n\ \text{odd}}} q^{\frac{n(n+1)}{2}}\nonumber\\
&=\sum_{n\ge 0} q^{\frac{n(n+1)}{2}}-2\sum_{\substack{n\ge 1\\n\ \text{odd}}} q^{\frac{n(n+1)}{2}}=\Psi(q)-2R(q),
\end{align}
where
\[
R(q):=\sum_{\substack{n\ge 1\\n\ \text{odd}}} q^{\frac{n(n+1)}{2}}.
\]
Substituting \eqref{eqn2} and \eqref{eqn3} into \eqref{eqn1}, we obtain
\begin{align*}
2qT_o(q)&=\frac{\Psi(q)}{(q)_{\infty}}\left((q)_{\infty}+2S(q)\right)-\left(\Psi(q)-2R(q)\right)\nonumber\\
&=\frac{2\Psi(q)S(q)}{(q)_{\infty}}+2R(q).
\end{align*}
Multiplying both sides by $\frac 12$, this yields
\begin{equation}\label{eqn4}
qT_o(q)=\frac{\Psi(q)S(q)}{(q)_{\infty}}+R(q).
\end{equation}
By \Cref{prop2} and \Cref{prop1}
\begin{align*}
\frac{\Psi(q)}{(q)_{\infty}}&=\frac{\left(q^2;q^2\right)^2_{\infty}}{(q)^2_{\infty}}=(-q)^2_{\infty}\equiv \left(q^2;q^2\right)_{\infty}%\\%\pmod{2}\\%&
=\sum_{m\in \mathbb{Z}}(-1)^m q^{m(3m+1)}\\%\\%\pmod{2}%
&\equiv \sum_{m\in \mathbb{Z}} q^{m(3m+1)}\pmod{2}.
\end{align*}
Substituting this into \eqref{eqn4}, we have
\begin{align}\label{eqn5}
qT_o(q)&\equiv S(q)\sum_{m\in \mathbb{Z}} q^{m(3m+1)}+R(q)\nonumber\\%\pmod{2}\nonumber\\
&=\sum_{\substack{r\ge 0\\r\ \text{odd}}} \left(1-q^{2r+1}\right)q^{\frac{r(3r+1)}{2}}\sum_{m\in \mathbb{Z}} q^{m(3m+1)}+\sum_{\substack{m\ge 1\\m\ \text{odd}}} q^{\frac{m(m+1)}{2}}\nonumber\\
&\equiv\sum_{\substack{r\ge 1\\r\ \text{odd}\\m\in \mathbb{Z}}}\left(q^{\frac{r(3r+1)}{2}+m(3m+1)}+q^{\frac{r(3r+1)}{2}+(2r+1)+m(3m+1)}\right)+\sum_{\substack{m\ge 1\\m\ \text{odd}}} q^{\frac{m(m+1)}{2}}\nonumber\\
&=:\sum_{n\ge 1}\a(n)q^n\pmod{2}.
\end{align}
%where
%\[
%\sum_{n\ge 1}\a(n)q^n:=\sum_{\substack{r\ge 1\\r\ \text{odd}\\m\in \mathbb{Z}}}\left(q^{\frac{r(3r-1)}{2}+m(3m+1)}+q^{\frac{r(3r-1)}{2}+(2r+1)+m(3m+1)}\right)+\sum_{\substack{m\ge 1\\m\ \text{odd}}} q^{\frac{m(m+1)}{2}}.
%\]
Thus, by \eqref{def4} and \eqref{eqn5}, we have
\begin{align*}
qT_o(q)=\sum_{n\ge 0}t_o(n)q^{n+1}\equiv\sum_{n\ge 1}\a(n)q^n= \sum_{n\ge 0}\a(n+1)q^{n+1} \pmod{2},
\end{align*}
and so 
\[
t_o(n)\equiv \a(n+1)\pmod{2}.
\]

Now assume that 
\[
t_o(n)\not\equiv 0\pmod{2}.%\Leftrightarrow 
\]
Equivalently,
\[
\a(n+1)\not\equiv 0\pmod{2}.
\]
Then one of the terms on the right-hand side of \eqref{eqn5} contributes. We split into three cases.

 First, let $n+1=\frac{m(m+1)}{2}$ for some odd $m\in \mathbb{N}$. Then %Then, for some $m\in \mathbb{N}_0$, 
\[
 8n+9=4m^2+4m+1=(2m+1)^2.
\]
Thus, $8n+9$ is represented by $x^2+2y^2$ with $x=2m+1$ and $y=0$.  

Next, suppose that for some odd $r\in \mathbb{N}$ and some $m\in \mathbb{Z}$
\begin{align*}
n+1=\frac{r(3r+1)}{2}+m(3m+1).
\end{align*}
This implies that
\begin{align*}
 8n+9&=4r(3r+1)+8m(3m+1)+1=\left(2r+4m+1\right)^2+2\left(2r-2m\right)^2.
\end{align*}
Therefore, in this case, $8n+9$ is represented by $x^2+2y^2$ with $x=2r+4m+1$ and $y=2r-2m$.

Finally, assume that %for $r\in \mathbb{N}$, $r$ odd, and $m\in \mathbb{N}$,
\begin{align*}
&n+1=\frac{r(3r+1)}{2}+2r+1+m(3m+1)
\end{align*}
for some odd $r\in \mathbb{N}$ and some $m\in \mathbb{Z}$. Consequently, we have
\begin{align*}
8n+9&=4r(3r+1)+8(2r+1)+8m(3m+1)+1\\
&=\left(2r-4m+1\right)^2+2\left(2r+2m+2\right)^2.
\end{align*}
Therefore, $8n+9$ is represented by $x^2+2y^2$ with $x=2r-4m+1$ and $y=2r+2m+2$. It follows from \eqref{eqn5} that whenever $8n+9$ is not represented by $x^2+2y^2$,
\[
t_o(n)\equiv 0\pmod{2}.
\]
This proves \Cref{thm}.\qedhere 
\end{proof}


\begin{thebibliography}{999}
	
\bibitem{Andbook} G. Andrews, {\it The theory of partitions}, Cambridge University Press, Cambridge, 1998.

\bibitem{AndCC} G. Andrews, {\it Concave and convex compositions}, Ramanujan J. {\bf 31} (2013), 67--82.

\bibitem{AB} G. Andrews and M. El Bachraoui, {\it On a two-color partition series and its companions}, arXiv:2606.30208 (2026).

\bibitem{LX} E. Liu and E. Xia, {\it A proof of Andrews--El Bachraoui's conjecture on the parity of coefficients of a $q$-series}, arXiv:2607.07145 (2026).




%\bibitem{Ono} K. Ono, {\it The web of modularity: arithmetic of the coefficients of modular forms and $q$-series}, CBMS Regional Conference Series in Mathematics {\bf 102} (2003), AMS, Providence, RI.





  
\end{thebibliography}
\end{document}